\numberwithin{equation}{section}
\numberwithin{table}{section}
\newtheorem{theorem}{Theorem}[section]
\newcommand{\smod}[1]{\hspace{-1mm} \pmod{#1}}
\newcommand{\qu}[2]{\Bigl({\frac{#1}{#2}}\Bigr) }
\newcommand{\dqu}[2]{\ds{\qu{#1}{#2}}}
\def \ds{\displaystyle}
\newcommand{\nn}{\mathbb{N}}
\newcommand{\zz}{\mathbb{Z}}
\newcommand{\qq}{\mathbb{Q}}
\newcommand{\cc}{\mathbb{C}}
\newcommand{\hh}{\mathbb{H}}
\begin{document}

\markboth{Zafer Selcuk Aygin}
{Ramanujan-Mordell Formula}
\title{Extensions of Ramanujan-Mordell Formula with Coefficients $1$ and $p$}
\author{Zafer Selcuk Aygin}
\address{Division of Mathematical Sciences, School of Physical and Mathematical Sciences, Nanyang Technological University, 21 Nanyang Link, Singapore 637371, Singapore} 
\email{selcukaygin@ntu.edu.sg}

\begin{abstract}
We use properties of modular forms to prove the following extension of the Ramanujan-Mordell formula, 
\begin{align*}
z^{k-j}z_p^{j}=&\frac{p_{\chi}^{k-j}-1}{p_{\chi}^{k}-1}F_p(k,j;\tau)+ \frac{p_{\chi}^{k}-p_{\chi}^{k-j}}{p_{\chi}^{k}-1}F_p(k,j;p\tau)+z^{k} A_p(k,j;\tau), 
\end{align*}
for all $ 1 < k \in \nn $, $0 \leq j \leq k$  and $p$ an odd prime. We obtain this result by computing the Fourier series expansions of modular forms at all cusps of $\Gamma_0(4p)$. \\

\noindent {\it Keywords:} {Dedekind eta function, eta quotients, theta functions, Eisenstein series,  cusp forms, modular forms, Fourier coefficients, Ramanujan-Mordell formula, sum of divisors function, Fourier series.}\\

\noindent Mathematics Subject Classification: 11E20, 11E25, 11F11, 11F20, 11F27, 11F30, 11Y35.
\end{abstract}

\maketitle


\section{Introduction}\label{sec:1}
Let $\nn$, $\nn_0$, $\zz$, $\qq$, $\cc$ and $\hh$ denote the sets of positive integers, non-negative integers, integers, rational numbers, complex numbers and the upper half plane, respectively. Throughout the paper we let $\tau \in \hh$ and  $q=e^{2 \pi i \tau}$. Let $N\in\nn$. Let $\Gamma_0(N)$ be  the modular subgroup defined by
\begin{align*}
\Gamma_0(N) = \left\{ \begin{bmatrix}
    a       & b  \\
    c       & d 
\end{bmatrix}  \mid  a,b,c,d\in \zz ,~ ad-bc = 1,~c \equiv 0 \smod {N}
\right\} .
\end{align*} 

An element {\renewcommand*{\arraystretch}{0.5} $M= \begin{bmatrix}
    a       & b  \\
    c       & d 
\end{bmatrix} \in \Gamma_0(1)$} acts on $\hh \cup \qq \cup \{ \infty \}$ by
\begin{align*}
\ds M(\tau)=\left\{\begin{array}{ll}
	\frac{a\tau+b}{c\tau+d} & \mbox{ if } \tau \neq \infty, \\
	\frac{a}{c} & \mbox{ if } \tau= \infty .
\end{array} \right.
\end{align*}
Let $k \in \nn$. We write $M_k(\Gamma_0(N),\chi)$ to denote the space of modular forms for  $\Gamma_0(N)$ of weight $k$
with multiplier system $\chi$, and $E_k (\Gamma_0(N),\chi)$ and $S_k(\Gamma_0(N),\chi)$ to denote the subspaces of Eisenstein forms and cusp forms of  $M_k(\Gamma_0(N),\chi)$, respectively. When $\chi$ is the primitive principal character we tend to drop the character from the notation.
It is known that $M_k(\Gamma_0(N),\chi)$ is a linear vector space and that, (see for example \cite[p. 83]{stein} and \cite{Serre})
\begin{align}
M_k (\Gamma_0(N),\chi) = E_k (\Gamma_0(N),\chi) \oplus S_k(\Gamma_0(N),\chi). \label{1_1}
\end{align}

Let $\chi$ and $\psi$ be primitive characters. For $n\in \nn$ we define $\ds \sigma_{(k,\chi,\psi )}(n)$ by
\begin{align}
\sigma_{(k, \chi,\psi )}(n) =\sum_{1 \leq d\mid n}\chi(d)\psi(n/d)d^k. \label{3_1}
\end{align}
If $n \not\in \nn$ we set $\sigma_{(k,\chi,\psi )}(n)=0$. For each quadratic discriminant $t$, we put $\chi_{_t}(n)=\dqu{t}{n}$, where $\dqu{t}{n}$ is the Kronecker symbol defined by \cite[p. 296]{vaughan}. Note that, we use $\sigma_{k}(n)$ to denote $\sigma_{(k,\chi_1,\chi_1 )}(n)$, which coincides with the regular sum of divisors function.

Suppose $k \in \nn$ and $p$ an odd prime. The Eisenstein series defined by
\begin{align}
&\displaystyle E_{2k} (\tau)  =1-\frac{4k}{B_{2k}}\sum_{n=1}^{\infty} \sigma_{2k-1}(n)q^{ n }, \nonumber \\
& E^{(1)}_{2k-1}(\tau)= -\frac{4k-2}{B_{2k-1,\chi_{-4}}}\sum_{n \geq 1} \sigma_{(2k-2,\chi_{1}, \chi_{-4} )}(n) q^n, \label{eq2:5} \\
& E^{(2)}_{2k-1}(\tau)=1-\frac{4k-2}{B_{2k-1,\chi_{-4}}}\sum_{n \geq 1} \sigma_{(2k-2,\chi_{-4}, \chi_{1} )}(n) q^n, \label{eq2:6}
\end{align}
will be used to give bases for the spaces $E_{2k}(\Gamma_0(4p))$ and $E_{2k}(\Gamma_0(4p),\chi_{-4})$, see \cite[Theorem 5.9]{stein}. Here Bernoulli numbers $B_{2k}$ and the generalized Bernoulli numbers $B_{2k-1,\chi_{-4}}$ attached to $\chi_{-4}$ are defined by the generating functions
\begin{align*}
& \sum_{k=0}^{\infty} \frac{B_{k}}{k!} x^{k}  =   \displaystyle \frac{x}{e^x-1},\\
& \sum_{k=0}^{\infty} \frac{B_{k,\chi_{-4}}}{k!} x^k = \sum_{a=1}^4\frac{\chi_{-4}(a)x e^{ax}}{e^{4x}-1},
\end{align*}
respectively. For presentation purposes we chose the above normalization for Eisenstein series, which is different from both \cite[(5.3.1)]{stein} and \cite[(7.1.1)]{miyake}. Because of this difference later on we will need the Gauss sum, for a character $\chi$ of conductor $L$, defined by
\begin{align*}
W(\chi)=\sum_{a=0}^{L-1} \chi(a) e^{2 \pi i a \tau/L}.
\end{align*}

Let $m \in \nn$, $r_i \in \nn_0$, and $a_i \in \nn$ for all $1 \leq i \leq m$. Let 
\begin{align*}
N(a_1^{2r_1},a_2^{2r_2},\ldots,a_{m}^{2r_m}; n)
\end{align*}
denote the number of representations of $n$ by the quadratic form
\begin{align}
\sum_{i=1}^{m} \sum_{j=1}^{2r_i} a_{i}x_j^2. \label{eq:1}
\end{align}
Ramanujan's theta function  $\varphi (\tau)$ is defined by
\begin{align*}
\varphi(\tau) = \sum_{n=-\infty}^\infty q^{ n^2 },
\end{align*}
and for $a \in \nn$ we define
\begin{align*}
z_a=\varphi^2(a \tau).
\end{align*}
thus the generating function of number of representations of $n$ by the quadratic form (\ref{eq:1}) is given by
\begin{align*}
\sum_{n=0}^{\infty} N(a_1^{2r_1},a_2^{2r_2},\ldots,a_{m}^{2r_m}; n) q^n= \prod_{i=1}^m \varphi^{2r_i}(a_i \tau)= \prod_{i=1}^m z^{r_i}_{a_i} . 
\end{align*}
Ramanujan in \cite{19ramanujanocaf} gave a formula for $z^{k}$ for $k \in \nn$, from which the value of $N(1^{2k};n)$ follows. This formula was proven by Mordell \cite{mordell}. In 2010, Lemire in his PhD thesis (\cite{lemire}) gave formulas for $N(1^r,2^s,4^t;n)$ for $r \in \nn$, $s, t \in \nn_0$, $r+s+t=4k$. Recently, Cooper et al. in \cite{cooperrmf} gave analogues of Ramanujan-Mordell formula for $(\varphi(\tau) \varphi(p\tau))^k$ for $k \in \nn$ and $p=3,7,11,23$, from which the value of $N(1^{k},p^{k};n)$ follows. In this paper we extend the Ramanujan-Mordell formula with coefficients $1$ and $p$, i.e. we give formulas for $z^{k-j}z_p^{j}$ for $1<k \in \nn$, $0 \leq j \leq k$ and all odd primes $p$. This determines the values of $N(1^{2k-2j},p^{2j};n)$ for all $1<k$, $0 \leq j \leq k$ and $p$ an odd prime. There are some results in the literature which give similar results in terms of products of local densities, see \cite{arenas, survey, siegel}. The strength of our results is that we manage to give contributions from the Eisenstein parts explicitly as opposed to the previous algorithmic results. We use modular forms to prove our results. Our approach is different from the previous applications of modular forms. In the literature, usually the Fourier series expansions at $\infty$ is considered. In this paper, we compute the Fourier series expansions of certain modular forms at all cusps of $\Gamma_0(4p)$, and use them to prove the main theorem. 

The organization of the paper is as follows. In Section \ref{section:main} we state the main theorem. In Section \ref{section:prelim} we introduce the concept of the constant term of modular forms at the cusps $1/c$. We then compute these terms for the Eisenstein series. Then in Section \ref{section:spaces} we take advantage of the fact that the constant terms of Fourier series expansions of cusp forms are always $0$ to obtain some equations and we solve them to give the main terms of any modular form in $M_{2k}(\Gamma_0(4p))$ and $M_{2k-1}(\Gamma_0(4p),\chi_{-4})$. Our particular interest is the extensions of Ramanujan-Mordell formula, which fall into these spaces. In Section \ref{section:proof}, we prove the main theorem which is an application of Theorem \ref{th2:1} with the values of constant terms of the Fourier series expansions of $\varphi^{4k-2j}(\tau) \varphi^{2j}(p\tau)$ and $\varphi^{4k-2j-2}(\tau) \varphi^{2j}(p\tau)$ at cusps of $\Gamma_0(4p)$. The main theorem fails to provide precise description of the cusp part of the formula. In Section \ref{sec:2}, we fix $p=5$ and introduce families of eta quotients which give bases for the spaces $S_{2k}(\Gamma_0(20))$ and $S_{2k-1}(\Gamma_0(20),\chi_{-4})$. We then express the cusp part of the formula as linear combinations of these eta quotients. This basis additionally provides an example of a family of modular form spaces which are generated by eta quotients, a question asked by Ono \cite[Problem 1.68]{onoweb} and recently answered by Rouse and Webb \cite{rouse}.

\section{The main theorem}\label{section:main}

We define
{\small \begin{align*}
 & F_p(2k,j;\tau)= \frac{\chi_{-4}(p)^j}{2^{2k}-1} {  \left( (-1)^k E_{2k}(\tau) -  \left( (-1)^{k}+ \chi_{-4}(p)^{j } \right) E_{2k}(2\tau) + \chi_{-4}(p)^j 2^{2k} E_{2k}(4\tau)  \right)},\\
& F_p(2k-1,j;a \tau)= E^{(2)}_{2k-1}(a\tau) + \chi_{-4}(a) \chi_{-4}(p)^{ j }  ( -4)^{k-1} E^{(1)}_{2k-1}(a\tau), \mbox{ $a \in \{ 1, p \}$.}
\end{align*}}%
Note that the function $F_p(2k,0;\tau)$ is the same function Ramanujan used to give the main terms of his formula. We also define
\begin{align*}
p_{\chi}=\chi_{-4}(p) p=\begin{cases} 
p \mbox{, if $p \equiv 1 \pmod{4}$,}\\
-p \mbox{, if $p \equiv 3 \pmod{4}$.}
\end{cases}
\end{align*}
We are now ready to state our main theorem.
\begin{theorem}
Let $k>1$ be an integer and $0 \leq j \leq k $. Then there exists a modular function $ A_p(k,j;\tau)$ of weight $0$ for $\Gamma_0(4p)$ such that
\begin{align}
z^{k-j}z_p^{j}=&\frac{p_{\chi}^{k-j}-1}{p_{\chi}^{k}-1}F_p(k,j;\tau)+ \frac{p_{\chi}^{k}-p_{\chi}^{k-j}}{p_{\chi}^{k}-1}F_p(k,j;p\tau)+z^{k} A_p(k,j;\tau), \label{eq:16}
\end{align}
and $z^{k} A_p(k,j;\tau)$ is a cusp form.
\end{theorem}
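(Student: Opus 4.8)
The plan is to realize the left-hand side as a modular form in a space for which Theorem \ref{th2:1} supplies the Eisenstein part explicitly, and then to identify the resulting Eisenstein series with the claimed combination of the functions $F_p$. First I would observe that since $\varphi(\tau)$ is a form of weight $1/2$ on $\Gamma_0(4)$, we have $z=\varphi^2(\tau)\in M_1(\Gamma_0(4),\chi_{-4})$ and $z_p=\varphi^2(p\tau)\in M_1(\Gamma_0(4p),\chi_{-4})$, so that
\[
z^{k-j}z_p^{j}=\varphi^{2(k-j)}(\tau)\,\varphi^{2j}(p\tau)
\]
is a modular form of weight $k$ for $\Gamma_0(4p)$ with character $\chi_{-4}^{k}$; that is, it lies in $M_{k}(\Gamma_0(4p))$ when $k$ is even and in $M_{k}(\Gamma_0(4p),\chi_{-4})$ when $k$ is odd. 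Accordingly the argument splits into these two parity cases, which correspond exactly to the two definitions of $F_p(2k,j;\tau)$ and $F_p(2k-1,j;a\tau)$ given in Section \ref{section:main}.

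Next, using the decomposition \eqref{1_1} I would write $z^{k-j}z_p^{j}$ as the sum of its Eisenstein component and a cusp form. The key tool is Theorem \ref{th2:1}, which expresses the Eisenstein part of any form in these spaces as an explicit linear combination of the basis Eisenstein series built from $E_{2k}$ (respectively $E^{(1)}_{2k-1}$ and $E^{(2)}_{2k-1}$) evaluated at $\tau,2\tau,4\tau,p\tau,2p\tau,4p\tau$, with coefficients determined entirely by the constant terms of the form at the cusps of $\Gamma_0(4p)$. This reduction is valid precisely because the constant term of every cusp form vanishes at every cusp, so matching constant terms pins down the Eisenstein part uniquely.

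The central computation is therefore to evaluate the constant terms of $\varphi^{2(k-j)}(\tau)\varphi^{2j}(p\tau)$ at all cusps $1/c$ with $c\mid 4p$. I would carry this out by conjugating $\varphi$ by matrices sending $\infty$ to each cusp and applying the classical transformation formula for the theta function, keeping careful track of the automorphy factors, the cusp widths, and the Gauss sums $W(\chi)$ introduced earlier; the behaviour at the cusps lying above $p$ is where the arithmetic factor $p_{\chi}=\chi_{-4}(p)p$ and the powers $p_{\chi}^{k-j}$ and $p_{\chi}^{k}$ enter. Feeding these constant terms into Theorem \ref{th2:1} and simplifying should produce exactly the two coefficients $(p_{\chi}^{k-j}-1)/(p_{\chi}^{k}-1)$ and $(p_{\chi}^{k}-p_{\chi}^{k-j})/(p_{\chi}^{k}-1)$ multiplying $F_p(k,j;\tau)$ and $F_p(k,j;p\tau)$.

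Finally, I would define $z^{k}A_p(k,j;\tau)$ to be the difference between $z^{k-j}z_p^{j}$ and this explicit Eisenstein combination. By construction this difference is the cusp component of $z^{k-j}z_p^{j}$, hence a cusp form; dividing by $z^{k}=\varphi^{2k}(\tau)$, which is a nonzero modular form of weight $k$ on $\Gamma_0(4p)$ carrying the same character $\chi_{-4}^k$, yields a meromorphic modular form $A_p(k,j;\tau)$ of weight $0$, that is, a modular function for $\Gamma_0(4p)$. The main obstacle is the cusp computation of the third paragraph: correctly determining the complete set of inequivalent cusps of $\Gamma_0(4p)$, selecting a scaling matrix for each, and pushing the theta transformation through so that the Gauss-sum and $p_{\chi}$ factors assemble into the stated closed form.
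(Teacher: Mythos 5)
Your route is essentially the paper's own: split by the parity of $k$, use Theorem \ref{th2:1} to read the Eisenstein component off the constant terms of $\varphi^{2(k-j)}(\tau)\varphi^{2j}(p\tau)$ at the cusps of $\Gamma_0(4p)$ (the paper computes these via Theorem \ref{th:4} and K\"ohler's transformation formula, assembled in Table \ref{table:1}), verify that the resulting coefficients collapse to $(p_{\chi}^{k-j}-1)/(p_{\chi}^{k}-1)$ and $(p_{\chi}^{k}-p_{\chi}^{k-j})/(p_{\chi}^{k}-1)$ times $F_p(k,j;\tau)$ and $F_p(k,j;p\tau)$, and define $A_p(k,j;\tau)$ as the cusp component divided by $z^k$. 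The one cosmetic difference --- you deduce membership in $M_k(\Gamma_0(4p),\chi_{-4}^k)$ from the half-integral-weight theory of $\varphi$, while the paper rewrites $\varphi$ as an eta quotient via the Jacobi triple product and applies Ligozat's criterion --- is immaterial.

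There is, however, one genuine gap: the case $k=2$. The theorem's hypothesis is $k>1$, so weight $2$ is included ($z^{2-j}z_p^{j}\in M_2(\Gamma_0(4p))$), but Theorem \ref{th2:1} is stated only for weight $2k$ with $k>1$, i.e.\ weight at least $4$, and it cannot be extended verbatim to weight $2$: since $E_2$ is quasi-modular, the individual functions $E_2(d\tau)$ do not lie in $M_2(\Gamma_0(4p))$, so the basis $\{E_{2k}(d\tau): d\mid 4p\}$ on which your plan rests does not exist there. Your step ``feed the constant terms into Theorem \ref{th2:1}'' therefore fails at $k=2$. The paper treats this case separately: by Stein's result, $E_2(\Gamma_0(4p))$ has the basis $\{L_d(\tau)=E_2(\tau)-dE_2(d\tau): 1<d\mid 4p\}$ --- note its dimension is $5$, one fewer than the number of cusps, so the six constant-term equations form an overdetermined but consistent system --- with constant terms given by \eqref{eq:21}, and one solves that system instead. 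A related subtlety you would also need to address is that in weight $2$ the functions $F_p(2,j;\tau)$ and $F_p(2,j;p\tau)$ need not individually be modular (their definitions are linear combinations of $E_2(d\tau)$); only in the stated combination do the $E_2$-anomalies cancel, which is exactly what the consistency of the overdetermined system reflects. Supplying this weight-$2$ branch would complete your proof; for $k\geq 3$ your outline matches the paper's argument step for step.
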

This theorem extends the original Ramanujan-Mordell formula, see  \cite{cooper, mordell, 19ramanujanocaf}. To recover the Ramanujan-Mordell formula we put $j=0$ in (\ref{eq:16}) and obtain
\begin{align*}
z^{k}=&F_p(k,0;\tau)+z^{k} A_p(k,0;\tau),
\end{align*}
where $F_p(k,0;\tau)$ is the same expression with the formulas from  \cite{cooper, mordell, 19ramanujanocaf}. Recently in \cite{cooperrmf}, Cooper et al.\ gave formulas for $(\varphi(\tau)\varphi(p\tau))^k$ valid for all $k \in \nn$ and $p=3,7,11$ and $23$. Letting $1<k \in \nn$ be even, and replacing $j$ by $k/2$ in (\ref{eq:16}), we obtain
\begin{align*}
(\varphi(\tau)\varphi(p\tau))^{k}=&\frac{1}{p_{\chi}^{k/2}+1}F_p(k,k/2;\tau)+ \frac{p_{\chi}^{k/2}}{p_{\chi}^{k/2}+1}F_p(k,k/2;p\tau)+z^{k} A_p(k,k/2;\tau).
\end{align*}
When we put $p=3,7,11$ or $23$, the main terms of the above formula agrees with the main terms of the formulas given by \cite{cooperrmf}. Our formula additionally holds for all odd primes.
\section{ Preliminary results } \label{section:prelim}

In this section we give some theoretic background and then compute the constant terms of Fourier series expansions of Eisenstein series at cusps of $\Gamma_0(4p)$. We use Theorem \ref{th:3} to compute this for $E_{2k}(d\tau)$ for all $d \in \nn$. Then we prove a similar theorem for modular forms in $M_{2k-1}(\Gamma_0(4),\chi_{-4})$. And we finish the computations using a theorem from \cite{miyake}. These results will be used to prove the main theorem. 

A set of representatives of all cusps of $\Gamma_0(4p)$  can be given by
\begin{align*}
R(4p)=\left\{ 1, \frac{1}{2}, \frac{1}{4}, \frac{1}{p}, \frac{1}{2p}, \infty  \right\},
\end{align*}
see \cite[Proposition 2.6]{iwaniec} or \cite{martin}.

Let 
\begin{align}
A_c=\begin{bmatrix}
    -1      & 0  \\
    c       & -1 
\end{bmatrix}, \label{eq:15}
\end{align}
then the Fourier series expansion of $f(\tau) \in M_k(\Gamma_0(N))$ at the cusp $\ds \frac{1}{c}\in \qq $ is given by the Fourier series expansion of $(c\tau+1)^k f(A_c^{-1}\tau)$ at the cusp $\infty$, see \cite[pg. 35]{Kohler}. Let the Fourier series expansion of $f(\tau)$ at the cusp $\ds \frac{1}{c} $ be given by the infinite sum
\begin{align*}
(c\tau+1)^{-k} f(A_c^{-1}\tau)= \sum_{n\geq 0} a_c(n) e^{2 \pi i (n+ \kappa) \tau/h},
\end{align*}
where $h$ the width of $\Gamma_0(N)$ at the cusp, and $0 \leq \kappa < 1$ is the cusp parameter of $f$ at $1/c$. Then we use the notation $[n]_cf(\tau)$ to denote $a_c(n)$. Noting that $[n]_0=[n]_{N}$, for notational convenience we write $[n]_{N}$ instead of $[n]_0$. If we say Fourier series expansion (or Fourier coefficients) without specifying the cusp, we mean the cusp $\infty$. And, for modular forms, `constant term of the Fourier expansion of $f(\tau)$ at cusp $\frac{1}{c}$' refers to the term $[0]_cf(\tau)$. We define $v_c(f)=n+ \kappa$, the order of $f(\tau)$ at $\frac{1}{c}$, where $n$ is the smallest integer such that $[n]_cf(\tau) \neq 0$. Here we should note that, on irregular cusps, the order of the modular form may not be an integer. In this paper, the cusps $1/2$ and $1/{2p} \in R(4p)$ are irregular, and we have
\begin{align*}
& v_{2}(f), v_{2p}(f) \in \nn_0, \mbox{when $f \in M_{2k}(\Gamma_0(4p))$,}\\
& v_{2}(f), v_{2p}(f) \in \nn_0 +\frac{1}{2}, \mbox{when $f \in M_{2k-1}(\Gamma_0(4p),\chi_{-4})$,}
\end{align*}
see \cite[Theorem 2.3.5]{ayginthesis} for details. For the latter case, it turns out we don't need to compute constant terms to prove our results. This appears to be connected to the relationship between the number of cusps and the dimension of Eisenstein spaces.

The following theorem is from an unpublished manuscript from the author.
\begin{theorem}{\cite[Theorem 2.1]{ayginsqfreelevels}} \label{th:3}  
Let $k \in \nn$. Let $f(\tau) \in M_{2k}(\Gamma_0(1))$, with the Fourier series expansion given by
\begin{align*}
f(\tau)=\sum_{n \geq 0} a_n q^n.
\end{align*}
Then for $d \in \nn$, the Fourier series expansion of $f_d(\tau)=f(d\tau)$ at cusp $1/c\in \qq$ is given by
\begin{align*}
 f_{d}(A_c^{-1}\tau)=\Big(\frac{g}{d}\Big)^{2k}(c\tau+1)^{2k} f\Big(\frac{g^2}{d}\tau+\frac{y g}{d}\Big)=\Big(\frac{g}{d}\Big)^{2k}(c\tau+1)^{2k}\sum_{n \geq 0} a_n q_c^n,
\end{align*}
where $g=\gcd(d, c)$,  $y$ is some integer, $A_c$ is the matrix given by {\em (\ref{eq:15})} and $\ds q_c=e^{2\pi i \left(\frac{g^2}{d}\tau+\frac{y g}{d}\right)}$.
\end{theorem}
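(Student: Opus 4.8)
The plan is to collapse the two substitutions $\tau\mapsto d\tau$ and $\tau\mapsto A_c^{-1}\tau$ into a single integral matrix and then factor that matrix through $\Gamma_0(1)$, so that the level-one, weight-$2k$ invariance of $f$ applies directly. First I would observe that $A_c=\begin{bmatrix} -1 & 0 \\ c & -1\end{bmatrix}$ has determinant $1$, so $A_c^{-1}=\begin{bmatrix} -1 & 0 \\ -c & -1\end{bmatrix}$ and $A_c^{-1}\tau=\tau/(c\tau+1)$. Consequently $f_d(A_c^{-1}\tau)=f\!\left(\dfrac{d\tau}{c\tau+1}\right)=f(B\tau)$, where $B=\begin{bmatrix} d & 0 \\ c & 1\end{bmatrix}$ has determinant $d$. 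The entire statement now reduces to rewriting $B\tau$ as some $M$ applied to a scaling-plus-translation map, with $M\in\Gamma_0(1)$, since only for such $M$ does the weight-$2k$ invariance of $f$ apply.

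Next I would produce this factorization explicitly. Set $g=\gcd(c,d)$, $d_1=d/g$, $c_1=c/g$, so that $\gcd(c_1,d_1)=1$; Bézout then supplies integers $b',d'$ with $d_1d'-c_1b'=1$. Taking $M=\begin{bmatrix} d_1 & b' \\ c_1 & d'\end{bmatrix}$ and $D=\begin{bmatrix} g & -b' \\ 0 & d_1\end{bmatrix}$, a one-line multiplication gives $MD=B$, and $\det M=1$ places $M\in\Gamma_0(1)$. Since $D\tau=\dfrac{g\tau-b'}{d_1}=\dfrac{g^2}{d}\tau+\dfrac{yg}{d}$ with $y=-b'$, this already exhibits the inner argument of $f$ claimed in the theorem.

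Then I would apply modularity and simplify. Writing $z=D\tau$, the weight-$2k$ invariance of $f$ under $M$ gives $f(B\tau)=f(M(D\tau))=(c_1 D\tau+d')^{2k}f(D\tau)$. Substituting $D\tau$ and using $d_1d'-c_1b'=1$ together with $c=c_1g$ collapses the automorphy factor to $c_1D\tau+d'=\dfrac{c_1 g\tau+1}{d_1}=\dfrac{c\tau+1}{d_1}=\dfrac{g}{d}(c\tau+1)$. Raising to the $2k$-th power produces exactly the prefactor $\left(\dfrac{g}{d}\right)^{2k}(c\tau+1)^{2k}$, and inserting the $q$-expansion $f(D\tau)=\sum_{n\geq 0}a_n q_c^n$ with $q_c=e^{2\pi i(\frac{g^2}{d}\tau+\frac{yg}{d})}$ yields the asserted identity.

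The one genuinely delicate point is the factorization $B=MD$: one must see that forcing the upper-left entry of $D$ to be $g=\gcd(c,d)$ is precisely what makes the entries of $M$ integral, and that the single Bézout relation $d_1d'-c_1b'=1$ does double duty, simultaneously guaranteeing $\det M=1$ and producing the cancellation $c_1D\tau+d'=(c\tau+1)/d_1$ in the automorphy factor. Everything else is routine bookkeeping; in particular $y=-b'$ is determined only up to the freedom in the Bézout coefficients, which is exactly why the theorem states merely that ``$y$ is some integer''. Because $f$ has level one there is no multiplier system to carry through, so the weight-$2k$ hypothesis is used solely via the clean transformation law.
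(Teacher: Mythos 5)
Your proof is correct, and it follows exactly the route the paper indicates: the paper only sketches the argument as ``matrix relations and the transformation formula for $f(\tau)\in M_{2k}(\Gamma_0(1))$'' (in the style of K\"ohler's Proposition~2.1), and your factorization $\begin{bmatrix} d & 0 \\ c & 1\end{bmatrix}=MD$ with $M\in\Gamma_0(1)$ and $D$ upper triangular, together with the Bézout relation $d_1d'-c_1b'=1$ collapsing the automorphy factor to $\frac{g}{d}(c\tau+1)$, is precisely that argument carried out in full. Nothing is missing; your remark that $y=-b'$ is only determined up to the Bézout ambiguity correctly explains the theorem's phrasing ``$y$ is some integer.''
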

The proof of the theorem, which is similar to the proof of \cite[Proposition 2.1]{Kohler}, follows from some matrix relations and transformation formula for $f(\tau) \in M_{2k}(\Gamma_0(1))$. We use Theorem \ref{th:3} to obtain the following table of the constant coefficients of Fourier series expansions of $E_{2k}(d\tau)$ at cusps of $\Gamma_0(4p)$ for $k>1$.

\begin{longtable}{l c c c c c c }
\caption{The constant terms of the Fourier series expansions of $E_{2k}(d\tau)$ at cusps of $\Gamma_0(4p)$} \label{table2:2}
\endhead
\hline
cusps &  $E_{2k}(\tau)$  &$E_{2k}(2\tau)$ & $E_{2k}(4\tau)$  & $E_{2k}(p\tau)$ & $E_{2k}(2p\tau)$ & $E_{2k}(4p\tau)$ \\
\hline
$1$   &  $ 1 $  & $\left( \frac{1}{2} \right)^{2k} $ & $ \left( \frac{1}{4} \right)^{2k}$  & $ \left( \frac{1}{p} \right)^{2k} $ & $ \left( \frac{1}{2p} \right)^{2k}$ & $\left( \frac{1}{4p} \right)^{2k} $  \\
 $1/2$   &  $1 $  & $ 1 $ & $ \left( \frac{1}{2} \right)^{2k} $  & $ \left( \frac{1}{p} \right)^{2k} $ & $ \left( \frac{1}{p} \right)^{2k}  $ & $ \left( \frac{1}{2p} \right)^{2k}  $  \\
 $1/4$  &  $ 1 $  & $ 1  $ & $ 1  $  & $ \left( \frac{1}{p} \right)^{2k}  $ & $ \left( \frac{1}{p} \right)^{2k}  $ & $ \left( \frac{1}{p} \right)^{2k}  $  \\
$1/p$  &  $ 1 $  & $ \left( \frac{1}{2} \right)^{2k}$ & $ \left( \frac{1}{4} \right)^{2k}$  & $ 1$ & $ \left( \frac{1}{2} \right)^{2k} $ & $ \left( \frac{1}{4} \right)^{2k} $  \\
$1/2p$  &  $1 $  & $1 $ & $\left( \frac{1}{2} \right)^{2k} $   & $1 $ & $ 1 $ & $ \left( \frac{1}{2} \right)^{2k} $  \\
$\infty$  &  $ 1 $   & $ 1$ & $ 1$  & $1 $ & $1 $ & $ 1$  \\
\hline
\end{longtable}

The next theorem is equivalent of the previous theorem for the modular form space $M_{2k-1}(\Gamma_0(4),\chi_{-4})$, i.e., given that $f(\tau) \in M_{2k-1}(\Gamma_0(4),\chi_{-4})$ and we know the Fourier series expansion of $f(\tau)$ at cusps $1$ and $\infty$, we determine the Fourier series expansions of modular forms $f(p\tau) \in M_{2k-1}(\Gamma_0(4p),\chi_{-4})$ at cusps $1$, $1/4$, $1/p$ and $\infty$. The proof of Theorem \ref{th:4} depends on manipulations of $2\times 2$ matrices. 
\begin{theorem} \label{th:4} Let $k \in \nn$ and $f(\tau) \in M_{2k-1}(\Gamma_0(4),\chi_{-4})$, and the Fourier series expansions of $f(\tau)$ at cusps $1$ and $\infty$, be given by
\begin{align*}
& (\tau+1)^{1-2k} f(A_{1}^{-1} \tau)= \sum_{n\geq 0} a_1 (n) e^{2 \pi i n \tau/4},\\
& f( \tau)= \sum_{n\geq 0} a_0 (n) e^{2 \pi i n \tau},
\end{align*}
respectively. Let $p$ be an odd prime. Then the Fourier series expansions of $f(\tau)$ and $f_p(\tau)=f(p\tau)\in M_{2k-1}(\Gamma_0(4p),\chi_{-4})$ at cusps $1$, $1/4$, $1/p$ and $\infty$ are given by
\begin{align}
& (\tau+1)^{1-2k} f(A_{1}^{-1} \tau)= \sum_{n\geq 0} a_1 (n) e^{2 \pi i n \tau/4}, \label{eq2:1}\\
&  (\tau+1)^{1-2k} f_p(A_{1}^{-1} \tau)= \left( \frac{1}{p} \right)^{2k-1} \sum_{n\geq 0} a_1 (n) e^{2 \pi i  \frac{n(\tau -p_{\chi}p+1)}{4p}},\label{eq:12}\\
& (4 \tau +1)^{1-2k} f(A_{4}^{-1} \tau)= \sum_{n\geq 0} a_0 (n) e^{2 \pi i n \tau},\label{eq2:2} \\
& (4\tau+1)^{1-2k} f_p(A_{4}^{-1} \tau)= \chi_{-4}(p) \left( \frac{1}{p} \right)^{2k-1}  \sum_{n\geq 0} a_0 (n) e^{2 \pi i n \frac{4 \tau - p_{\chi}+1}{4p}},\label{eq:13}\\
& (p\tau+1)^{1-2k} f(A_{p}^{-1} \tau)=\chi_{-4}(p) \sum_{n\geq 0} (\chi_{-4}(p))^{n} a_1 (n) e^{2 \pi i n \tau/4}, \label{eq2:3}\\
&(p\tau+1)^{1-2k}  f_p(A_{p}^{-1} \tau)=   \sum_{n\geq 0} a_1 (n) e^{2 \pi i n p \tau/4},\label{eq:14}\\
& f( \tau)=\sum_{n\geq 0} a_0 (n) e^{2 \pi i n \tau},\label{eq2:4}\\
& f_p( \tau)=  \sum_{n\geq 0} a_{0} (n) e^{2 \pi i n p \tau},\label{eq:2}
\end{align}
respectively. 
\end{theorem}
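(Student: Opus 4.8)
The plan is to reduce each of the eight expansions to a single matrix identity and then invoke the transformation law of $f$ under $\Gamma_0(4)$. Write $B_p=\left[\begin{smallmatrix} p & 0\\ 0 & 1\end{smallmatrix}\right]$, so that $f_p(\tau)=f(B_p\tau)$, and recall that for $\gamma=\left[\begin{smallmatrix} a & b\\ c & d\end{smallmatrix}\right]\in\Gamma_0(4)$ one has $f(\gamma\tau)=\chi_{-4}(d)(c\tau+d)^{2k-1}f(\tau)$. For the cusp $1/c$ the series we must produce is the $\infty$-expansion of $(c\tau+1)^{1-2k}g(A_c^{-1}\tau)$, where $g\in\{f,f_p\}$ and $A_c^{-1}=\left[\begin{smallmatrix} -1 & 0\\ -c & -1\end{smallmatrix}\right]$. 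Equations (\ref{eq2:1}) and (\ref{eq2:4}) are simply the hypotheses restated, since the local behaviour of $f$ at the cusps $1$ and $\infty$ is unchanged when the group is enlarged to $\Gamma_0(4p)$. For each of the remaining six expansions I would factor the outer matrix as
\begin{align*}
A_c^{-1}=\gamma\,A_1^{-1}\,M \ \text{ or }\ B_pA_c^{-1}=\gamma\,A_1^{-1}\,M \ \text{ (cusp-$1$ route)}, \qquad A_c^{-1}=\gamma\,M \ \text{ or }\ B_pA_c^{-1}=\gamma\,M \ \text{ ($\infty$ route)},
\end{align*}
with $\gamma\in\Gamma_0(4)$ and $M=\left[\begin{smallmatrix} a & t\\ 0 & d\end{smallmatrix}\right]$ upper triangular; the choice of route is dictated by whether the target series is written in the $a_1(n)$ or the $a_0(n)$ coefficients.

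Given such a factorization, the transformation law turns $g(A_c^{-1}\tau)$ into $\chi_{-4}(d_\gamma)$ times an automorphy factor times either $f(A_1^{-1}M\tau)$ or $f(M\tau)$, and one substitutes the known expansion with its variable rescaled by $M$. Since $M$ acts by $\tau\mapsto(a\tau+t)/d$, substituting into the width-$w$ expansion at the inner cusp replaces the $q$-parameter by $e^{2\pi i(a\tau+t)/(wd)}$; the widths $w$ at the cusps $1$, $1/4$, $1/p$ together with the entries of $M$ account for the numerators $\tau$, $4\tau$, $p\tau$ and the denominators $4p$ in (\ref{eq:12})--(\ref{eq:14}), while the determinant of $M$ ($1$ for $f$ and $p$ for $f_p$) and the weight produce the normalizing factors $(1/p)^{2k-1}$. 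The lower-right entry $d_\gamma$ of $\gamma$ supplies the character values $\chi_{-4}(p)$ in (\ref{eq:13}) and (\ref{eq2:3}), the translation $t$ contributes a phase $e^{2\pi int/(wd)}$ to each coefficient, and the product of the two automorphy factors collapses, by the cocycle relation applied to the matrix identity, to $(c\tau+1)^{2k-1}$ up to the sign coming from the $-1$'s in $A_c$, which is absorbed on multiplying by $(c\tau+1)^{1-2k}$.

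As the representative computation, for (\ref{eq2:3}) I take $M=\left[\begin{smallmatrix} 1 & b\\ 0 & 1\end{smallmatrix}\right]$ and $\gamma=A_p^{-1}M^{-1}A_1$; imposing $\gamma\in\Gamma_0(4)$ forces $b\equiv p-1\pmod 4$, whence $d_\gamma\equiv p\pmod 4$ gives $\chi_{-4}(d_\gamma)=\chi_{-4}(p)$ and the phase $e^{2\pi inb/4}=(i^{b})^{n}=(\chi_{-4}(p))^{n}$, matching (\ref{eq2:3}) exactly. The same scheme handles (\ref{eq:14}) (here $B_pA_p^{-1}=A_1^{-1}B_p$, so no $\gamma$ is needed), (\ref{eq2:2}) and (\ref{eq:2}), and, with $\det M=p$, the two shifted expansions (\ref{eq:12}) and (\ref{eq:13}).

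The main obstacle is the combined bookkeeping in the last two cases: one must choose the translation entry $t$ of $M$ simultaneously so that the lower-left entry of $\gamma$ is divisible by $4$ and so that the resulting phase is correct. Solving the congruence $t\equiv 1-\chi_{-4}(p)p^{2}\pmod{4p}$ is exactly what produces the shift $-p_{\chi}p+1$ in (\ref{eq:12}) (and $-p_{\chi}+1$ in (\ref{eq:13})); verifying that this single value of $t$ makes $\gamma$ lie in $\Gamma_0(4)$, that $\chi_{-4}(d_\gamma)$ comes out as claimed, and that the determinant-$p$ automorphy factor contributes precisely $(1/p)^{2k-1}$ and not an extra power of $p$, is where the care is needed. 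Everything else reduces to routine $2\times2$ matrix multiplication together with the congruence $p^{2}\equiv1\pmod4$.
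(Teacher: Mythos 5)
Your proposal is correct and is essentially the paper's own proof: the paper likewise factors each relevant matrix (in its notation $A=M_1A_c^{-1}M_2$ with $M_1\in\Gamma_0(4)$ and $M_2$ upper triangular, where $A$ is $B_pA_c^{-1}$ in your notation) and applies the transformation law, and the decompositions listed in Table \ref{table2:1} are exactly the ones your congruence conditions single out, including the translation entry $1-p_{\chi}p$ behind the shift in \eqref{eq:12} and the condition $d_\gamma\equiv p\pmod 4$ producing the factor $\chi_{-4}(p)$ and the phase $\chi_{-4}(p)^n$ in \eqref{eq2:3}. Your worked sample for \eqref{eq2:3} plays the same role as the paper's single worked case, namely \eqref{eq:13} with $p\equiv 3\pmod 4$, with all remaining cases relegated to the table of matrix identities.
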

\begin{proof}
We use the following matrix equations to prove (\ref{eq2:1})--(\ref{eq:2}). The idea is to write a $2\times 2$ matrix into $A=M_1 A_c^{-1} M_2 $, where $M_1 \in \Gamma_0(4)$ and $M_2$ is any matrix with bottom-left entry equal to $0$. As the remaining cases are similar, we prove only (\ref{eq:13}) and in Table \ref{table2:1} we give the matrix equations used to prove other expansions.
{\scriptsize
\begin{longtable}{l l  }
\caption{ Matrices } \label{table2:1}\\
\hline 
Equation &  Matrix Decomposition \\ 
\hline\\
\endfirsthead
\multicolumn{2}{c}{\tablename\ \thetable\ -- \textit{Continued from previous page}} \\
\hline 
Equation &  Matrix Decomposition \\ 
\hline\\
\endhead
  \multicolumn{2}{c}{\textit{Continued on next page}} \\
\endfoot
\hline
\endlastfoot
(\ref{eq2:1}) & $\begin{bmatrix} -1 & 0 \\ -1 & -1 \end{bmatrix}=\begin{bmatrix} 1 & 0 \\ 0 & 1 \end{bmatrix}\begin{bmatrix} -1 & 0 \\ -1 & -1 \end{bmatrix}\begin{bmatrix} 1 & 0 \\ 0 & 1 \end{bmatrix}$ \\[3ex]
(\ref{eq:12}) & $\begin{bmatrix} -p & 0 \\ -1 & -1 \end{bmatrix}=\begin{bmatrix}p-p^2+1 & p^2-1 \\ 1-p & p \end{bmatrix} \begin{bmatrix} -1 & 0 \\ -1 & -1 \end{bmatrix} \begin{bmatrix} 1 & 1-p^2  \\ 0 & p \end{bmatrix}$ \mbox{, if $p \equiv 1 \pmod{4}$} \\[3ex]
 & $\begin{bmatrix} -p & 0 \\ -1 & -1 \end{bmatrix}=\begin{bmatrix} -p^2-p-1 & p^2+1 \\ -p-1 & p \end{bmatrix} \begin{bmatrix} -1 & 0 \\ -1 & -1 \end{bmatrix} \begin{bmatrix} -1 & -p^2-1  \\  0 & -p \end{bmatrix}$ \mbox{, if $p \equiv 3 \pmod{4}$} \\[3ex]
(\ref{eq2:2}) & $\begin{bmatrix} -1 & 0 \\ -4 & -1 \end{bmatrix}=\begin{bmatrix} 1 & 0 \\ 0 & 1 \end{bmatrix}\begin{bmatrix} -1 & 0 \\ -4 & -1 \end{bmatrix}\begin{bmatrix} 1 & 0 \\ 0 & 1 \end{bmatrix}$ \\[3ex]
(\ref{eq:13}) & $\begin{bmatrix} -p & 0 \\ -4 & -1 \end{bmatrix} =\begin{bmatrix} p & (p-1)/4  \\ 4 & 1 \end{bmatrix} \begin{bmatrix} 1 & 0 \\ 0 & 1 \end{bmatrix} \begin{bmatrix} -1 & (p-1)/4  \\ 0 & -p \end{bmatrix}$ \mbox{, if $p \equiv 1 \pmod{4}$}\\[3ex]
& $\begin{bmatrix} -p & 0 \\ -4 & -1 \end{bmatrix} =\begin{bmatrix} p & -(p+1)/4  \\ 4 & -1 \end{bmatrix} \begin{bmatrix} 1 & 0 \\ 0 & 1 \end{bmatrix} \begin{bmatrix} -1 & -(p+1)/4  \\ 0 & -p \end{bmatrix}$ \mbox{, if $p \equiv 3 \pmod{4}$}\\[3ex]
(\ref{eq2:3}) & $\begin{bmatrix} -1 & 0 \\ -p & -1 \end{bmatrix}=\begin{bmatrix} 1 & 0 \\ p-1 & 1 \end{bmatrix}\begin{bmatrix} -1 & 0 \\ -1 & -1 \end{bmatrix}\begin{bmatrix} 1 & 0 \\ 0 & 1 \end{bmatrix}$ \mbox{, if $p \equiv 1 \pmod{4}$} \\[3ex]
& $\begin{bmatrix} -1 & 0 \\ -p & -1 \end{bmatrix}=\begin{bmatrix} 1 & -2 \\ p+1 & -2p-1 \end{bmatrix}\begin{bmatrix} -1 & 0 \\ -1 & -1 \end{bmatrix}\begin{bmatrix} -1 & 2 \\ 0 & -1 \end{bmatrix}$ \mbox{, if $p \equiv 3 \pmod{4}$}\\[3ex]
(\ref{eq:14}) & $\begin{bmatrix} -p & 0 \\ -p & -1 \end{bmatrix}=\begin{bmatrix} 1 & 0 \\ 0 & 1 \end{bmatrix}\begin{bmatrix} -1 & 0 \\ -1 & -1 \end{bmatrix}\begin{bmatrix} p & 0 \\ 0 & 1 \end{bmatrix}$ \\[3ex]
 (\ref{eq2:4}) & $\begin{bmatrix} 1 & 0 \\ 0 & 1 \end{bmatrix}=\begin{bmatrix} 1 & 0 \\ 0 & 1 \end{bmatrix}\begin{bmatrix} 1 & 0 \\ 0 & 1 \end{bmatrix}\begin{bmatrix} 1 & 0 \\ 0 & 1 \end{bmatrix}$ \\[3ex]
 (\ref{eq:2}) & $\begin{bmatrix} p & 0 \\ 0 & 1 \end{bmatrix}=\begin{bmatrix} 1 & 0 \\ 0 & 1 \end{bmatrix}\begin{bmatrix} 1 & 0 \\ 0 & 1 \end{bmatrix}\begin{bmatrix} p & 0 \\ 0 & 1 \end{bmatrix}$  \\[3ex]
 \hline
\end{longtable}}
Now we prove (\ref{eq:13}), when $p \equiv 3 \pmod 4$. Using the matrix decomposition given in Table \ref{table2:1} and $f(\tau)$ being in $ M_{2k-1}(\Gamma_0(4),\chi_{-4})$ we have 
 \begin{align*}
 f_p(A_{4}^{-1} \tau)&= f \left( \begin{bmatrix} p & -(p+1)/4  \\  4 & -1 \end{bmatrix}  \begin{bmatrix} -1 & -(p+1)/4  \\  0 & -p \end{bmatrix} (\tau) \right)\\
&= \chi_{-4}(-1) \left(4 \begin{bmatrix} -1 & -(p+1)/4  \\  0 & -p \end{bmatrix} (\tau)  -1\right)^{2k-1}  f\left(   \begin{bmatrix} -1 & -(p+1)/4  \\  0 & -p \end{bmatrix} (\tau) \right)\\
&= \chi_{-4}(p) \left( \frac{4\tau +1}{p}  \right)^{2k-1}  f\left(   \frac{4\tau -p_{\chi}+1}{4p} \right).
 \end{align*}
 \end{proof}
To conclude this section we give the table of Fourier series expansions of $E_{k,\chi_{-4},\chi_{1}}(\tau)$ and $E_{k,\chi_{1},\chi_{-4}}(\tau)$ at regular cusps of $\Gamma_0(4p)$. The Fourier series expansions of $E^{(2)}_{2k-1}(\tau)$ and $E^{(1)}_{2k-1}(\tau)$ at $\infty$ are already known, see (\ref{eq2:5}) and (\ref{eq2:6}). We use \cite[Lemma 7.1.2]{miyake} to obtain the following equalities, which allow us to compute desired Fourier series expansions at the cusp $1$. The Gauss sum $W(\chi)$ in the below formulas appear due to different choice of normalization of Eisenstein series.
\begin{align*}
 E^{(2)}_{2k-1}\left(\frac{-1}{\tau}\right)&= \frac{W(\chi_{1})}{W(\chi_{-4})} (\tau)^{2k-1}   E^{(1)}_{2k-1}\left(\frac{\tau}{4}\right)= \frac{-i}{2} (\tau)^{2k-1}   E^{(1)}_{2k-1}\left(\frac{\tau}{4}\right),\\
 E^{(1)}_{2k-1}\left(\frac{-1}{\tau}\right)&= -\frac{W(\chi_{-4})}{W(\chi_{1})4^{2k-1}} (\tau)^{2k-1}   E^{(2)}_{2k-1}\left(\frac{\tau}{4}\right)= \frac{-2i}{4^{2k-1}} (\tau)^{2k-1}   E^{(2)}_{2k-1}\left(\frac{\tau}{4}\right).
\end{align*}
We use $\begin{bmatrix} -1 & 0 \\ -1 & -1 \end{bmatrix}=\begin{bmatrix} 1 & 1 \\ 0 & 1 \end{bmatrix}\begin{bmatrix} 0 & 1 \\ -1 & 0 \end{bmatrix}\begin{bmatrix} 1 & 1 \\ 0 & 1 \end{bmatrix}$ to obtain
\begin{align*}
 (\tau+1)^{1-2k} E^{(2)}_{2k-1}(A_1^{-1}\tau)&=  \frac{-i}{2}    E^{(1)}_{2k-1}\left(\frac{\tau+1}{4}\right),\\
(\tau+1)^{1-2k} E^{(1)}_{2k-1}(A_1^{-1}\tau)&=  \frac{-2i}{4^{2k-1}}    E^{(2)}_{2k-1}\left(\frac{\tau+1}{4}\right).
\end{align*}

Now we turn to Theorem \ref{th:4} to obtain the following table. 
 
\begin{longtable}{l c c c c}
\caption{The constant terms of the Fourier series expansions of $E^{(2)}_{2k-1}(\tau)$ and $E^{(1)}_{2k-1}(\tau)$} \label{table2:3}
\endhead
\hline
cusps &  $E^{(2)}_{2k-1}(\tau)$  & $E^{(2)}_{2k-1}(p\tau)$  & $E^{(1)}_{2k-1}(\tau)$  & $E^{(1)}_{2k-1}(p\tau)$ \\
\hline
 $1$ &  $ 0 $  & $0 $ & $  \frac{-2i}{4^{2k-1}} $ & $ \frac{-2i}{{(4p)}^{2k-1}} $ \\
$1/4$ &  $1$  & $ \frac{\chi_{-4}(p)}{p^{2k-1}}  $ & $0$  & $0 $  \\
$1/p$  &  $ 0 $   & $0 $ & $\frac{-2 i \chi_{-4}(p) }{4^{2k-1}} $  & $ \frac{-2 i}{4^{2k-1}}  $  \\
 $\infty$ &  $ 1$  & $ 1  $ & $0 $  & $ 0$ \\
\hline
\end{longtable}

\section{ The spaces $M_{2k}(\Gamma_0(4p))$ and $M_{2k-1}(\Gamma_0(4p),\chi_{-4})$  }\label{section:spaces}
In this paper our particular interest is on extensions of the Ramanujan-Mordell formula. However our calculations yield to the following theorem, which provides information on any modular form in $M_{2k}(\Gamma_0(4p))$ and $M_{2k-1}(\Gamma_0(4p),\chi_{-4})$. For notational convenience let us fix
\begin{align*}
& A_k(p,t,f)= \frac{ {[0]_tf}  - \chi_{-4}(p)^{tk}[0]_{tp}f  }{(2^{k}-1) (p_{\chi}^{k}-1)}\\
& B_k(p,t,f)=\frac{p_{\chi}^{k}[0]_tf- \chi_{-4}(p)^{tk}[0]_{tp}f }{(2^{k}-1) (p_{\chi}^{k}-1)}
\end{align*}
for $t \mid 4$.
\begin{theorem}\label{th2:1} Let $k>1$ be an integer and $p$ be an odd prime. If $f(\tau) \in M_{2k}(\Gamma_0(4p))$, then there exists a cusp form $C_{2k,4p}(\tau) \in S_{2k}(\Gamma_0(4p))$ such that
\begin{align*}
f(\tau)= \sum_{d \mid 4p} b_d(p,f) E_{2k}(d \tau) + C_{2k,4p}(\tau)
\end{align*}
where
{
\begin{align*}
&b_1(p,f)= {2^{2k}B_{2k}(p,1,f)}  -  {B_{2k}(p,2,f) } ,\\ 
& b_2(p,f)=  {-2^{2k} B_{2k}(p,1,f) }   +  {\left( 2^{2k}+1 \right) B_{2k}(p,2,f) }- { B_{2k}(p,4,f)  },\\
&b_4(p,f)= -2^{2k} \left( B_{2k}(p,2,f)  - B_{2k}(p,4,f) \right),\\ 
& b_p(p,f)= -p^{2k} \left( {2^{2k} A_{2k}(p,1,f)  -  A_{2k}(p,2,f)} \right),\\ 
& b_{2p}(p,f)= p^{2k} \left( {2^{2k} A_{2k}(p,1,f)} - { \left( 2^{2k}+1 \right) A_{2k}(p,2,f)} + { A_{2k}(p,4,f)}\right) ,\\
&b_{4p}(p,f)=  {(2p)^{2k} \left( A_{2k}(p,2,f) -  A_{2k}(p,4,f) \right) }.
\end{align*}
}
If $f(\tau) \in M_{2k-1}(\Gamma_0(4p),\chi_{-4})$, then there exists a cusp form
$C_{2k-1,4p}(\tau) $ \\
$\in S_{2k-1}(\Gamma_0(4p),\chi_{-4})$ such that
\begin{align*}
f(\tau)= & a_1(p,f) E^{(2)}_{2k-1}(\tau)+ a_2(p,f) E^{(2)}_{2k-1}(p\tau)\\
& + a_3(p,f) E^{(1)}_{2k-1}(\tau) + a_4(p,f) E^{(1)}_{2k-1}(\tau) + C_{2k-1,4p}(\tau)
\end{align*}
where 
\begin{align*}
& a_1(p,f)= (2^{2k-1}-1)B_{2k-1}(p,4,f)      ,      \\
& a_2(p,f)= (2^{2k-1}-1) p_{\chi}^{2k-1} A_{2k-1}(p,4,f)    ,      \\
& a_3(p,f)= \frac{i4^{2k-1}(2^{2k-1}-1)}{2} B_{2k-1}(p,1,f)   ,       \\
& a_4(p,f)= \frac{-i(4p)^{2k-1}(2^{2k-1}-1)}{2} A_{2k-1}(p,1,f)    .       
\end{align*}
\end{theorem}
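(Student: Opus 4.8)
The plan is to combine the direct-sum decomposition (\ref{1_1}) with the known Eisenstein bases and then pin down the Eisenstein coefficients by reading off constant terms at the cusps. Since $f(\tau)\in M_{2k}(\Gamma_0(4p))$ and $\{E_{2k}(d\tau): d\mid 4p\}$ is a basis of $E_{2k}(\Gamma_0(4p))$ (by \cite[Theorem 5.9]{stein}, where the hypothesis $k>1$ guarantees weight $2k\geq 4$ and the clean dimension count), the decomposition (\ref{1_1}) yields a unique expression $f(\tau)=\sum_{d\mid 4p} b_d(p,f)\,E_{2k}(d\tau)+C_{2k,4p}(\tau)$ with $C_{2k,4p}\in S_{2k}(\Gamma_0(4p))$; the whole content of the even-weight half of the theorem is thus the determination of the six coefficients $b_d$. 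The leverage I would use is that a cusp form has vanishing constant term at \emph{every} cusp, so applying the functional $[0]_c$ to both sides annihilates $C_{2k,4p}$ and leaves, for each of the six cusps $1/c\in R(4p)$, the linear relation $[0]_c f=\sum_{d\mid 4p} b_d(p,f)\,[0]_c E_{2k}(d\tau)$.

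Next I would assemble these six relations into one system $M\mathbf{b}=\mathbf{v}$, with $\mathbf{b}=(b_1,b_2,b_4,b_p,b_{2p},b_{4p})^{T}$, target vector $\mathbf{v}=([0]_1 f,\dots,[0]_\infty f)^{T}$, and $M$ exactly the constant-term matrix of Table \ref{table2:2}, whose entries are $[0]_c E_{2k}(d\tau)=(\gcd(c,d)/d)^{2k}$ by Theorem \ref{th:3} (the phase in $q_c$ is irrelevant to the $n=0$ term). The structural feature that makes the inversion tractable is that $4p=4\cdot p$ with $\gcd(4,p)=1$: every divisor and every cusp index factors compatibly and $\gcd(c,d)=\gcd(c_1,d_1)\gcd(c_2,d_2)$, so $M$ is the Kronecker product of the $3\times 3$ matrix for level $4$ (indices $1,2,4$) with the $2\times 2$ matrix $\left(\begin{smallmatrix}1 & p^{-2k}\\ 1 & 1\end{smallmatrix}\right)$ for level $p$. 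Hence $M^{-1}$ is the Kronecker product of the two inverses, whose determinants supply precisely the factors $2^{2k}-1$ and $p^{2k}-1$ occurring in the stated denominators. Solving $\mathbf{b}=M^{-1}\mathbf{v}$ and repackaging the level-$p$ part into the quantities $A_{2k}(p,t,f)$ and $B_{2k}(p,t,f)$ (here $p_\chi^{2k}=p^{2k}$ and $\chi_{-4}(p)^{2tk}=1$, so these collapse to the expected divisor differences) then reproduces the formulas for $b_1,\dots,b_{4p}$.

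For the odd-weight case I would run the identical argument in $M_{2k-1}(\Gamma_0(4p),\chi_{-4})$, using the Eisenstein basis $\{E^{(2)}_{2k-1}(a\tau),E^{(1)}_{2k-1}(a\tau):a\in\{1,p\}\}$ together with the constant terms of Table \ref{table2:3}. The decisive point is that, under the nontrivial character $\chi_{-4}$, the cusps $1/2$ and $1/2p$ are irregular with half-integer cusp parameter, so they furnish no genuine constant-term equation; this costs nothing, because the Eisenstein space is exactly four-dimensional and the four regular cusps $1,1/4,1/p,\infty$ supply precisely four equations. Assembling the resulting $4\times 4$ system from Table \ref{table2:3} and inverting it yields $a_1,\dots,a_4$, with the Gauss-sum normalization constants $-i/2$ and $-2i/4^{2k-1}$ feeding into the coefficients of the $E^{(1)}$ terms exactly as required. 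The main obstacle I anticipate is computational rather than conceptual: confirming nonsingularity of the two constant-term matrices and pushing the linear algebra through to the stated closed forms, while in the odd case tracking the character values $\chi_{-4}(p)^{n}$ and the normalization factors carefully, so that the symmetric definitions of $A_k$, $B_k$ and $p_\chi$ emerge instead of a less uniform answer.
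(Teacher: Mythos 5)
Your proposal is correct and follows essentially the same route as the paper: decompose via (\ref{1_1}) with the Eisenstein basis of \cite[Theorem 5.9]{stein}, kill the cusp form by taking constant terms $[0]_c$ at the six (respectively four regular) cusps, and solve the resulting linear system whose entries are exactly Tables \ref{table2:2} and \ref{table2:3}. Your Kronecker-factorization of the constant-term matrix into level-$4$ and level-$p$ blocks is a nice organizational refinement of the inversion that the paper leaves implicit, but it is not a different proof.
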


\begin{proof}
By \cite[Theorem 5.9]{stein} and (\ref{1_1}), for any $f(\tau)\in M_{2k}(\Gamma_0(4p))$ we have
\begin{align*}
f(\tau)=\sum_{d \mid 4p} b_d E_{2k}(d\tau) + C_{2k,4p}(\tau)
\end{align*}
for some $b_d \in \cc$ and $C_{2k,4p}(\tau) \in S_{2k}(\Gamma_0(4p))$. We use the fact that constant coefficients of cusp forms are $0$ at all cusps to obtain
\begin{align}
[0]_c f(\tau)=\sum_{d \mid 4p}  b_d [0]_c E_{2k}(d\tau) + 0, \label{eq2:7}
\end{align}
for all $c \mid 4p$. The entries of the matrix of system of linear equations determined by (\ref{eq2:7}) is given by Table \ref{table2:2}. We solve this system to obtain desired equations for $b_d$. The second part of the theorem can be proven similarly by using the entries given by Table \ref{table2:3}.
\end{proof}
For brevity we don't state Theorem \ref{th2:1} for weight $2$ spaces. One can use (\ref{eq:21}) and above arguments to give the statement for $M_{2}(\Gamma_0(4p))$.
\section{ Proof of the main theorem  } \label{section:proof}
By Jacobi's triple product identity \cite[p. 10]{SprtRmnj} we have
\begin{align*}
z_a=\varphi^2(a\tau)=\left(\frac{\eta^{5}(2a\tau)}{\eta^2(a\tau) \eta^2(4a\tau)}\right)^2. 
\end{align*}
That is, we can rewrite the generating function in terms of eta quotients:
\begin{align*}
&z^{2k-j-1}z_p^{j}= \varphi^{4k-2j-2}(\tau)\varphi^{2j}(p\tau)=  \left( \frac{\eta^5(2 \tau)}{\eta^2( \tau) \eta^2(4 \tau)} \right)^{4k-2j-2} \left( \frac{\eta^5(2p\cdot \tau)}{\eta^2(p\cdot \tau) \eta^2(4p \cdot \tau)} \right)^{2j},\\
&z^{2k-j}z_p^{j}=  \varphi^{4k-2j}(\tau)\varphi^{2j}(p\tau)=  \left( \frac{\eta^5(2 \tau)}{\eta^2( \tau) \eta^2(4 \tau)} \right)^{4k-2j} \left( \frac{\eta^5(2p\cdot \tau)}{\eta^2(p\cdot \tau) \eta^2(4p \cdot \tau)} \right)^{2j}. 
\end{align*}
By Ligozat Theorem (\cite{Ligozat}, or \cite[Theorem 2.1]{ayginsten}), for $k \in \nn$ we have
\begin{align*}
& \varphi^{4k-2j-2}(\tau)\varphi^{2j}(p\tau) \in M_{2k-1}(\Gamma_0(4p),\chi_{-4}), \mbox{ for all $0 \leq j \leq 2k+1$, }\\
& \varphi^{4k-2j}(\tau)\varphi^{2j}(p\tau)\in M_{2k}(\Gamma_0(4p)), \mbox{ for all $0 \leq j \leq 2k$. }
\end{align*}
We compute
\begin{align*}
[0]\varphi^2(\tau)=1 \mbox{, and } [0]_1\varphi^2(\tau)=\frac{-i}{2},
\end{align*}
using \cite[Proposition 2.1]{Kohler}. Then by Theorem \ref{th:4} we compute the following table for $k \in \nn$ (or, alternatively one can use \cite[Proposition 2.1]{Kohler} to compute all the entries without using Theorem \ref{th:4}.)
\begin{longtable}{l c c c c c c }
\caption{ The constant terms of the Fourier series expansions of $\varphi^{2k-2j}(\tau)\varphi^{2j}(p\tau)$} \label{table:1}
\endhead
\hline
cusps &  $1$  & $1/2$ & $1/4$  & $1/p$ & $1/2p$ & $\infty$ \\
\hline
$\varphi^{4k-2j-2}(\tau)\varphi^{2j}(p\tau)$  &  $\ds  \frac{(-1)^{k} i}{2^{2k-1}p^j} $ & NA & $ \ds \frac{\chi_{-4}(p)^j}{p^j}  $ & $\ds \frac{(-1)^{k} \chi_{-4}(p)^{j-1}  i}{2^{2k-1}} $ & NA & $ 1 $  \\
$\varphi^{4k-2j}(\tau)\varphi^{2j}(p\tau)$   & $\ds \frac{(-1)^k }{2^{2k}p^{j}}$ & $ 0 $ & $ \ds  \frac{\chi_{-4}(p)^j}{p^j}  $  & $  \ds  \frac{\left( -1 \right)^{k} \chi_{-4}(p)^{j}}{2^{2k}}$ & $ 0$ & $1 $  \\
\hline
\end{longtable}
Then we put the values of constant terms of generating functions at the cusps given by Table \ref{table:1} in Theorem \ref{th2:1}. Thus, for $k>1$, we have that there exist cusp forms $C_{2k,4p}(\tau) \in S_{2k}(\Gamma_0(4p))$ and $C_{2k-1,4p}(\tau) \in S_{2k-1}(\Gamma_0(4p),\chi_{-4})$ such that
\begin{align*}
\varphi^{4k-2j}(\tau) \varphi^{2j}(p\tau)= & \frac{p_{\chi}^{2k-j}-1}{p_{\chi}^{2k}-1}F_p(2k,j;\tau)+ \frac{p_{\chi}^{2k}-p_{\chi}^{2k-j}}{p_{\chi}^{2k}-1}F_p(2k,j;p\tau) + C_{2k,4p}(\tau),\\
\varphi^{4k-2j-2}(\tau) \varphi^{2j}(p\tau)=&  \frac{p_{\chi}^{2k-1-j}-1}{p_{\chi}^{2k-1}-1}F_p(2k-1,j;\tau)+ \frac{p_{\chi}^{2k-1}-p_{\chi}^{2k-1-j}}{p_{\chi}^{2k-1}-1}F_p(2k-1,j;p\tau)\\
&+ C_{2k-1,4p}(\tau).
\end{align*}
On the other hand $\ds A_p(k,j;\tau)=\frac{C_{k,4p}(\tau)}{z^{k}}$ is a modular function of weight $0$ for $\Gamma_0(4p)$, from which the theorem follows. Note that, the poles of $A_p(k,j;\tau)$ occur at cusps $\frac{1}{2}$ and $\frac{1}{2p}$, a similar feature is present in the original Ramanujan-Mordell formula.

We didn't state Theorem \ref{th2:1} for weight $2k=2$ spaces for brevity. Below we sketch the proof of main theorem for weight $2$ spaces.  By \cite[Theorem 5.9]{stein} the set $\{ L_{d}(\tau) : 1<d \mid 4p \}$ form a basis for $E_{2}(\Gamma_0(4p))$, where $L_{d}(\tau)=E_2(\tau)-d E_2(d \tau)$. We use \cite[(1.21)]{Kohler} to compute
\begin{align}
[0]_c L_d(\tau)=1-\frac{\gcd(c,d)^2}{d}, \mbox{ for all $c,d \mid 4p$, $d>1$. } \label{eq:21}
\end{align}
The result for weight $2$ extension follows from solving the equations
\begin{align*} 
[0]_c z^{2-j} z_p^{j}=[0]_c \varphi^{4-2j}(\tau) \varphi^{2j}(p \tau)= \sum_{1<d \mid 4p} b_d [0]_c L_d(\tau)=\sum_{1<d \mid 4p}  b_d\left(1-\frac{\gcd(c,d)^2}{d} \right),
\end{align*}
($c \mid 4p$) for $b_d$.
\section{Eta quotients generating $S_{k}(\Gamma_0(20),\chi)$}\label{sec:2}
For $p \leq 13$ it is possible to express $z^kA_p(k,j;\tau)$  in terms of eta quotients, see \cite[Corollary 3]{rouse}. The case $p=3$, has been given in author's PhD thesis, see \cite[Theorem 5.1.3]{ayginthesis}. In this section we give the bases for $S_{2k}(\Gamma_0(20))$ and $S_{2k-1}(\Gamma_0(20),\chi_{-4})$ in terms of eta quotients. We then express $A_5(k,j;\tau)$ as linear combinations of eta quotients. Let us define the following eta quotients, which will be used to express basis elements. 
\begin{align*}
& S_1(k,l ;\tau)=\left(\frac{\varphi(5\tau) }{\varphi(\tau) } \right)^{k} \left(\frac{\eta^{3}(2\tau) \eta^{5}(5\tau) \eta^{10}(20\tau) }{\eta(\tau) \eta^{2}(4\tau) \eta^{15}(10\tau)  }\right)^{l} \left(\frac{ \eta^{12 }( 2\tau)  \eta^{12 }( 5\tau) \eta^{ 12}( 20\tau) }{ \eta^{ 4}( \tau) \eta^{ 4}( 4\tau) \eta^{ 28}( 10\tau)  } \right),\\
& S_2(k,l ;\tau)=\left(\frac{\varphi(5\tau) }{\varphi(\tau) } \right)^{k} \left(\frac{\eta^{3}(2\tau) \eta^{5}(5\tau) \eta^{10}(20\tau) }{\eta(\tau) \eta^{2}(4\tau) \eta^{15}(10\tau)  }\right)^{l}\left(\frac{ \eta^{8 }( 2\tau) \eta^{13 }(5 \tau)  \eta^{ 15}(20 \tau)  }{ \eta( \tau) \eta^{ 3}(4 \tau)  \eta^{ 32}( 10\tau) } \right),\\
& S_3(k,l ;\tau)=\left(\frac{\varphi(5\tau) }{\varphi(\tau) } \right)^{k} \left(\frac{\eta^{3}(2\tau) \eta^{5}(5\tau) \eta^{10}(20\tau) }{\eta(\tau) \eta^{2}(4\tau) \eta^{15}(10\tau)  }\right)^{l} \left(\frac{\eta^{ 9}( 2\tau) \eta^{ 15}( 5\tau)  \eta^{ 18}( 20\tau)  }{  \eta^{ 3}( \tau) \eta^{ 2}( 4\tau)\eta^{ 37}( 10\tau) } \right).
\end{align*}
\begin{theorem} {\label{th:5_2}} Let $k \in \nn $. The sets of eta quotients
\begin{align*}
& \left\{  z^2 S_1(6,0;\tau) \right\},\\
& \left\{ z^{4k}S_1(10k,l;\tau) \mid 0\leq l \leq 4k-3 \right\}\cup \left\{ z^{4k}S_2(10k,l;\tau) \mid 0\leq l \leq 4k-3 \right\} \\
&\qquad \cup \left\{ z^{4k}S_3(10k,l;\tau) \mid 0\leq l \leq 4k-3 \right\},\\
& \left\{ z^{4k-2}S_1(10k-4,l;\tau) \mid 0\leq l \leq 4k-4 \right\}\cup \left\{ z^{4k-2}S_2(10k-6,l;\tau) \mid 0\leq l \leq 4k-5 \right\} \\
&\qquad \cup \left\{ z^{4k-2}S_3(10k-6,l;\tau) \mid 0\leq l \leq 4k-6 \right\},
\end{align*}
form a basis for $S_{2}(\Gamma_0(20))$, $S_{4k}(\Gamma_0(20))$ (for $k\geq 1$) and $S_{4k-2}(\Gamma_0(20))$ (for $k > 1$), respectively; and 
\begin{align*}
& \left\{ z^{4k-1}{S_1(10k-2,l;\tau)} \mid 0\leq l \leq 4k-3 \right\}\cup \left\{ z^{4k-1}S_2(10k-2,l;\tau) \mid 0\leq l \leq 4k-4 \right\} \\
&\qquad \cup \left\{ z^{4k-1}S_3(10k-2,l;\tau) \mid 0\leq l \leq 4k-4 \right\},\\
& \left\{ z^{4k-3}{S_1(10k-6,l;\tau)} \mid 0\leq l \leq 4k-5 \right\}\cup \left\{ z^{4k-3}S_2(10k-8,l;\tau) \mid 0\leq l \leq 4k-6 \right\} \\
&\qquad \cup \left\{ z^{4k-3}S_3(10k-8,l;\tau) \mid 0\leq l \leq 4k-6 \right\},
\end{align*}
form a basis for $S_{4k-1}(\Gamma_0(20),\chi_{-4})$ (for $k\geq 1$) and $S_{4k-3}(\Gamma_0(20),\chi_{-4})$ (for $k> 1$), respectively.
\end{theorem}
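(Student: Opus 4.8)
The plan is to establish, for each of the spaces in the statement, the three facts that together certify a basis: that every listed eta quotient genuinely lies in the asserted cusp-form space, that the number of listed quotients equals the dimension of that space, and that the quotients are linearly independent. The observation that makes all three tractable is that the weight is carried entirely by the prefactor $z^{m}=\varphi^{2m}(\tau)$: a direct exponent count shows that $\varphi(5\tau)/\varphi(\tau)$, the factor raised to the power $l$, and each of the three tail factors that distinguish $S_1,S_2,S_3$ are all of \emph{weight $0$}. Thus varying $k$, $l$, and the choice among $S_1,S_2,S_3$ only redistributes orders among the cusps of $\Gamma_0(20)$ while keeping the weight pinned at the value dictated by $z^{m}$.

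First I would settle membership via Ligozat's theorem (\cite{Ligozat}, or \cite[Theorem 2.1]{ayginsten}). After writing $z$ and $\varphi$ themselves as eta quotients, each basis element is a single eta quotient $\prod_{d\mid 20}\eta(d\tau)^{r_d}$, and Ligozat's criteria yield the weight, the level $\Gamma_0(20)$, and the Nebentypus (trivial in the even-weight cases, $\chi_{-4}$ in the odd-weight cases where the weight $2k-1$ is odd). The decisive point is cuspidality: I would apply Ligozat's order formula at each of the six cusps of $\Gamma_0(20)$ (representatives $\infty,0,1/2,1/4,1/5,1/10$) and check that the order is strictly positive at every one of them. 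The admissible ranges on $l$ in the statement---$0\le l\le 4k-3$, together with the shorter ranges for $S_2,S_3$ in the odd-weight and the $S_{4k-2}$ cases---are precisely the constraints that keep all six cusp orders positive; verifying this cusp-by-cusp, uniformly in $k$, is the technical heart of the argument.

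Next I would count and check independence simultaneously. A short computation of the order at $\infty$ shows that the factor raised to the power $l$ contributes $3$ to that order per unit of $l$, while the tails of $S_1,S_2,S_3$ contribute $1,2,3$ respectively and the weight-carrying factor and $\varphi(5\tau)/\varphi(\tau)$ contribute $0$. Hence the orders at $\infty$ of the three families sit in the three distinct residue classes $1,2,0\pmod 3$, so all listed quotients have pairwise distinct orders at $\infty$; this gives linear independence at once. Counting the elements over the stated $l$-ranges and comparing with the dimensions of $S_{2k}(\Gamma_0(20))$ and $S_{2k-1}(\Gamma_0(20),\chi_{-4})$ from the standard dimension formulas (e.g.\ \cite{stein}) then shows the counts agree (for instance $12k-6$ in the $S_{4k}$ case, and the genus value $1$ in the exceptional weight-$2$ case), so a linearly independent set of the correct cardinality inside each space is a basis.

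The main obstacle is the membership/cuspidality step: one must verify, uniformly in $k$ and across the entire admissible range of $l$, that Ligozat's order formula returns a strictly positive value at each of the six cusps, and that the character emerges as exactly $\chi_{-4}$ in the odd-weight cases. This is mechanical once the six cusp-order functions are written down as linear expressions in $k$ and $l$, but it is where the precise endpoints of the $l$-ranges are forced, and it is the part that must be executed with care. By comparison the dimension bookkeeping is routine and the linear independence is immediate from the distinctness of the orders at $\infty$.
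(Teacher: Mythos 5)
Your proposal is correct and follows essentially the same route as the paper: membership and cuspidality are checked via Ligozat's theorem, the dimensions $12k-6$, $12k-12$, $12k-8$, $12k-14$ (and $1$ in weight $2$) come from the standard formulas in \cite{stein}, and linear independence follows from the pairwise distinct orders at $\infty$. Your explicit computation that the order at $\infty$ equals $3l+a$ for the family $S_a$ (so the three families occupy distinct residue classes mod $3$) is a slightly more detailed justification of the paper's one-line independence claim, and is consistent with the paper's later remark that $[3l+a]S_a(*,l;\tau)=1$.
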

\begin{proof}
We use Ligozat Theorem (\cite{Ligozat}, or \cite[Theorem 2.1]{ayginsten}) to check each eta quotient is a cusp form in the corresponding space, and use \cite[Proposition 6.1 and pg. 98]{stein} to compute
\begin{align*}
& {\rm dim}(S_{2}(\Gamma_0(20)))=1,\\
& {\rm dim}(S_{4k}(\Gamma_0(20)))=12k-6,\\
& {\rm dim}(S_{4k-2}(\Gamma_0(20)) )=12k-12,\\
& {\rm dim}( S_{4k-1}(\Gamma_0(20),\chi_{-4}))=12k-8,\\
& {\rm dim}( S_{4k-3}(\Gamma_0(20),\chi_{-4}))=12k-14.
\end{align*}
Further, the orders of zeros of eta quotients at $\infty$ in each set are different, from which the linear independence follows. 
\end{proof}
Additionally this choice of bases give a lower triangular shape to the corresponding matrix system, which allows iterative determination of the coefficients of eta quotients in linear combinations to represent $z^kA_5(k,j;\tau)$.
\begin{theorem}
Let $k>1$ be an integer and $0 \leq j \leq k $. Then we have
\begin{align*}
z^{k-j}z_5^{j}=&\frac{5^{k-j}-1}{5^{k}-1}F_5(k,j;\tau)+ \frac{5^{k}-5^{k-j}}{5^{k}-1}F_5(k,j;5\tau)+z^{k} A_5(k,j;\tau),
\end{align*}
where
{\scriptsize \begin{align*}
&A_5(k,j;\tau)=\\
& \begin{cases}
\ds \frac{4}{3} (1-(-1)^j) S_1(6,0;\tau) , \mbox{ if $k=2$,}\\
\ds \sum_{l=0}^{k-3} \alpha_{3l+1} S_1\left(\frac{5k}{2},l;\tau \right) + \sum_{l=0}^{k-3} \alpha_{3l+2} S_2\left(\frac{5k}{2},l;\tau \right) + \sum_{l=0}^{k-3} \alpha_{3l+3}  S_3\left(\frac{5k}{2}k,l;\tau \right), \mbox{ if $4 \mid k$,} \\
\ds \sum_{l=0}^{k-2} \alpha_{3l+1} S_1\left(\frac{5k+3}{2},l;\tau \right) + \sum_{l=0}^{k-3} \alpha_{3l+2} S_2\left(\frac{5k-1}{2},l;\tau \right) + \sum_{l=0}^{k-3} \alpha_{3l+3}  S_3\left(\frac{5k-1}{2},l;\tau \right), \mbox{ if $4 \mid k-1$,} \\
\ds \sum_{l=0}^{k-2} \alpha_{3l+1} S_1\left(\frac{5k+2}{2},l;\tau \right) + \sum_{l=0}^{k-3} \alpha_{3l+2} S_2\left(\frac{5k-2}{2},l;\tau \right) + \sum_{l=0}^{k-4} \alpha_{3l+3}  S_3\left(\frac{5k-2}{2},l;\tau \right), \mbox{ if $4 \mid k-2$,} \\
\ds \sum_{l=0}^{k-2} \alpha_{3l+1} S_1\left(\frac{5k+1}{2},l;\tau \right) + \sum_{l=0}^{k-3} \alpha_{3l+2} S_2\left(\frac{5k+1}{2},l;\tau \right) + \sum_{l=0}^{k-3} \alpha_{3l+3}  S_3\left(\frac{5k+1}{2},l;\tau \right), \mbox{ if $4 \mid k-3$,} 
\end{cases}
\end{align*} }
and
{\small \begin{align*}
\alpha_l= [l]z^{k-j}z_5^{j} - \frac{5^{k-j}-1}{5^{k}-1}[l]F_5(k,j;\tau) - \frac{5^{k}-5^{k-j}}{5^{k}-1}[l]F_5(k,j;5\tau) - ( [l]z^{k} A_5(k,j;\tau) - \alpha_l) .
\end{align*}}
\end{theorem}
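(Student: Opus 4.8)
The plan is to obtain this $p=5$ statement by specializing the main theorem of Section~\ref{section:main} (equation (\ref{eq:16})) and then resolving its cusp part against the explicit eta-quotient bases of Theorem~\ref{th:5_2}. Since $5\equiv 1\pmod 4$ we have $\chi_{-4}(5)=1$ and hence $p_{\chi}=5$, so substituting $p=5$ into (\ref{eq:16}) reproduces the displayed main-term coefficients $\tfrac{5^{k-j}-1}{5^{k}-1}$ and $\tfrac{5^{k}-5^{k-j}}{5^{k}-1}$ verbatim, and guarantees that the remainder $z^{k}A_5(k,j;\tau)$ is a cusp form. By the membership statements recorded in Section~\ref{section:proof}, this cusp form lies in $S_{k}(\Gamma_0(20))$ when $k$ is even and in $S_{k}(\Gamma_0(20),\chi_{-4})$ when $k$ is odd; these are exactly the spaces for which Theorem~\ref{th:5_2} supplies a basis.

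Next I would match the weight $k$ to its residue modulo $4$ and invoke the corresponding basis. Writing the internal weight parameter of Theorem~\ref{th:5_2} as $k'$, the cases $4\mid k$, $4\mid k-2$, $4\mid k-1$ and $4\mid k-3$ come respectively from the bases of $S_{4k'}(\Gamma_0(20))$ with $k'=k/4$, of $S_{4k'-2}(\Gamma_0(20))$ with $k'=(k+2)/4$, of $S_{4k'-3}(\Gamma_0(20),\chi_{-4})$ with $k'=(k+3)/4$, and of $S_{4k'-1}(\Gamma_0(20),\chi_{-4})$ with $k'=(k+1)/4$ (the value $k=2$ being split off because $\dim S_2(\Gamma_0(20))=1$). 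Substituting these values of $k'$ into the first arguments and the summation bounds listed in Theorem~\ref{th:5_2} reproduces precisely the half-integer shifts and the (sometimes asymmetric) upper limits appearing in the displayed formula for $A_5(k,j;\tau)$; for instance, when $4\mid k-2$ the bound $4k'-6$ on the $S_3$-sum becomes $k-4$, which is why that sum stops one step earlier than the other two.

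The coefficients are then found by Fourier matching at $\infty$. As recorded in the proof of Theorem~\ref{th:5_2}, the elements of each basis have pairwise distinct orders of vanishing at $\infty$, and these orders run through consecutive integers; ordering the basis by increasing order at $\infty$ and writing $z^{k}A_5(k,j;\tau)=\sum_m \alpha_m g_m$, where $g_m$ is the $m$-th basis element, the matrix $\big([l]g_m\big)$ is then lower triangular, and it is unit lower triangular because each of these eta quotients is monic at its leading order $l=m$. Hence it is inverted by forward substitution: by (\ref{eq:16}) with $p=5$ the $l$-th Fourier coefficient of $z^{k}A_5(k,j;\tau)$ equals $[l]z^{k-j}z_5^{j}-\tfrac{5^{k-j}-1}{5^{k}-1}[l]F_5(k,j;\tau)-\tfrac{5^{k}-5^{k-j}}{5^{k}-1}[l]F_5(k,j;5\tau)$, while on the eta-quotient side the same coefficient is $\alpha_l$ plus the total contribution $[l]z^{k}A_5(k,j;\tau)-\alpha_l$ of the strictly lower-order basis elements already determined. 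Equating the two and solving for $\alpha_l$ gives exactly the displayed recursion, which is well posed because the subtracted quantity depends only on earlier coefficients. The base case $k=2$ is separate: here $z^{2}A_5(2,j;\tau)$ is a scalar multiple of $z^{2}S_1(6,0;\tau)$, and comparing the single undetermined Fourier coefficient at $\infty$ (which vanishes for even $j$) fixes the scalar at $\tfrac{4}{3}(1-(-1)^{j})$.

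The main obstacle I anticipate lies not in these linear-algebra steps but in the order-of-vanishing bookkeeping underlying the second and third paragraphs. One must verify, for each residue class of $k$ modulo $4$ and for the specific first-argument shifts chosen, that the orders at $\infty$ of the families $S_1,S_2,S_3$ genuinely interleave into a \emph{gapless} consecutive block, and moreover that this block is $\{1,2,\dots,D\}$ of the correct length $D$, matching the dimensions $12k'-6$, $12k'-12$, $12k'-8$, $12k'-14$ of Theorem~\ref{th:5_2}; only then does the index $l$ in $\alpha_l$ coincide with the order at $\infty$ and the displayed recursion read off correctly. It is exactly this requirement that forces the asymmetric summation bounds, and it is the only place where the four cases differ in substance; once it is checked, the unit lower triangularity, and hence the recursion for $\alpha_l$, follow automatically.
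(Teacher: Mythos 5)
Your proposal is correct and is essentially the paper's own (largely implicit) argument: specialize (\ref{eq:16}) to $p=5$ so that $p_{\chi}=5$, place $z^{k}A_5(k,j;\tau)$ in $S_{k}(\Gamma_0(20))$ or $S_{k}(\Gamma_0(20),\chi_{-4})$ according to the parity of $k$, expand it in the eta-quotient basis of Theorem \ref{th:5_2} for the appropriate residue of $k$ modulo $4$ (your dictionary between $k$ and the internal parameter $k'$, including the bound $4k'-6=k-4$ on the $S_3$-sum when $4\mid k-2$ and the separate treatment of $k=2$, is exactly right), and determine the coefficients by the forward substitution that the paper justifies via $[l]S_a(*,n;\tau)=0$ for $3n+a>l$ and $[3l+a]S_a(*,l;\tau)=1$. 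The only inaccuracy is in your anticipated obstacle: the orders $3n+a$ need not form a gapless block $\{1,\dots,D\}$ — for $4\mid k-2$ the order $3k-6$ is skipped while $3k-5$ occurs, consistent with $\dim S_{k}(\Gamma_0(20))=3k-6$ — but this is harmless, since pairwise distinctness of the orders together with the monic leading coefficients already makes the system unit lower triangular with respect to the index $m=3n+a$, so the recursion for $\alpha_l$ goes through unchanged.
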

The iteration given for $\alpha_l$ makes sense and computationally efficient, since $[l]S_a(*,n;\tau)=0$ for all $3n+a>l$, and $[3l+a]S_a(*,l;\tau)=1$. For an execution of a similar iteration see \cite[(7.1.9)]{ayginthesis}.

\section*{Acknowledgments} 
The author would like to thank the anonymous referee for his comments which was helpful in improving the exposition. The author was supported by the Singapore Ministry of Education Academic Research Fund, Tier 2, project number MOE2014-T2-1-051, ARC40/14.

\noindent
Zafer Selcuk Aygin\\
Division of Mathematical Sciences \\
School of Physical and Mathematical Sciences \\
Nanyang Technological University \\
21 Nanyang Link, Singapore 637371, Singapore

\vspace{1mm}

\noindent
selcukaygin@ntu.edu.sg

\end{document}